\newtheorem{theorem}{Theorem}
\newtheorem{lemma}{Lemma}
\newtheorem{corollary}{Corollary}
\newtheorem{remark}{Remark}
\newtheorem{assumption}{Assumption}
\newtheorem{standassumption}{Standing Assumption}
\newcommand{\RR}{{\mathbb{R}}}
\newcommand{\NN}{{\mathbb{N}}}
\newcommand{\EE}{{\mathbb{E}}}
\newcommand{\PP}{{\mathbb{P}}}
\newcommand{\JJ}{{\mathbb{J}}}
\newcommand{\FF}{{\mathbb{F}}}
\newcommand{\mc}{\mathcal}
\newcommand{\norm}[1]{\left\|#1\right\|}
\newcommand{\normsq}[1]{\left\|#1\right\|^2}
\newcommand{\EEk}[1]{\EE\left[#1|\mc F^k\right]}
\newcommand{\op}{\operatorname}
\newcommand{\prox}{\operatorname{prox}}
\newcommand{\bs}{\boldsymbol}
\newcommand{\fineass}{\hfill\small$\blacksquare$}
\newcommand{\cmark}{\ding{51}}%
\newcommand{\xmark}{\ding{55}}%
\begin{document}

\title{Forward--Backward algorithms for stochastic Nash equilibrium seeking in restricted strongly and strictly monotone games}

\author{Barbara Franci and Sergio Grammatico
\thanks{The authors are in the Delft Center for System and Control, TU Delft, The Netherlands. E-mail addresses:
        {\tt\small \{b.franci-1, s.grammatico\}@tudelft.nl}.}
\thanks{This work was partially supported by NWO under research projects OMEGA (613.001.702) and P2P-TALES (647.003.003), and by the ERC under research project COSMOS (802348).}}

\markboth{Journal of \LaTeX\ Class Files,~Vol.~14, No.~8, August~2015}%
{Shell \MakeLowercase{\textit{et al.}}: Bare Demo of IEEEtran.cls for IEEE Journals}

\maketitle
\thispagestyle{empty}
\pagestyle{empty}

\begin{abstract}
We study stochastic Nash equilibrium problems with expected valued cost functions whose pseudogradient satisfies restricted monotonicity properties which hold only with respect to the solution. We propose a forward-backward algorithm and prove its convergence under restricted strong monotonicity, restricted strict monotonicity and restricted cocoercivity of the pseudogradient mapping. To approximate the expected value, we use either a finite number of samples and a vanishing step size or an increasing number of samples with a constant step. 
Numerical simulations show that our proposed algorithm might be faster than the available algorithms.
\end{abstract}

\IEEEpeerreviewmaketitle

\section{Introduction}

In 1950, John Nash came up with the idea of what we now call \textit{Nash equilibrium} \cite{nash1950}, a situation where none of the agents involved can improve its performance, given the actions of the other agents. Since then, the interest on the topic has only been rising \cite{facchinei2007,sandholm2010,pavel2019,yi2019,koshal2013,ravat2011}. In a Nash equilibrium problem (NEP), a number of agents interact to minimize their cost function, taking into consideration also the action of the other involved parties. A natural extension of this problem is to include some uncertainty, leading to stochastic NEPs (SNEPs). As an example, one can consider the energy market where the companies do not know the demand in advance \cite{henrion2007} or an application related to machine learning. In fact, in Generative Adversarial Nets \cite{goodfellow2014}, two neural network compete against each other in a two-players SNEP in order to, e.g., generate realistic images \cite{franci2020tnnls,gidel2018}. 

In these problems, the uncertainty formally translates in a random variable, i.e., in an expected value cost function. When the distribution of the random variable is known, the problem can be solved as in the deterministic case. In fact, the stochastic Nash equilibria (SNE) can be obtained as the solution of a suitable stochastic variational inequality (SVI) depending on the pseudogradient mapping of the game \cite{facchinei2007vi,facchinei2007,facchinei2010}. On the other hand, computing the exact expected value, even when possible, can be hard or computationally expensive. For this reason, we resort to the so-called stochastic approximation (SA) scheme, i.e., a way to estimate the expected value via a finite number of realizations of the random variable \cite{koshal2013,yousefian2017,iusem2017}. The challenging aspect is therefore that a stochastic error is committed at each iteration of a seeking algorithm. To control such error, one can either consider to take a huge number of samples \cite{iusem2017,franci2020fbtac,bot2020} or a vanishing step size sequence \cite{koshal2013,yousefian2017}. 

Due to the connection with SVIs, a number of algorithms are present in the literature that show convergence to a SNE. 
Among others, one can consider the extragradient (EG) algorithm \cite{kannan2019}, which involves two evaluations of the pseudogradient mapping. Instead, to obtain computationally lighter methods, one can consider adding a relaxation or regularization step. In this direction, the available algorithms include the Tikhonov (TIK) regularization \cite{koshal2013}, the regularized smoothed stochastic approximation (RSSA) scheme \cite{yousefian2017} and the stochastic projected reflected gradient (SPRG) method \cite{cui2016}.
The first three algorithms have the advantage of converging when the involved mappings are monotone while the latter requires also the weak sharpness property (a consequence of cocoercivity). In contrast, they have the disadvantage of being slow (due to the regularization) or computationally heavy (due to the double pseudogradient evaluation at each iteration). Perhaps, the fastest and simplest algorithm known is the stochastic forward-backward (SFB) algorithm \cite{bau2011,franci2020fbtac,yi2019}, which, in its first SA formulation, dates back to 1951 \cite{robbins1951} and which originated all the algorithms just introduced as variants and refinements. We refer to Table \ref{table_algo} for the convergence rates and an overall comparison between these methods and their properties. 

\begin{table}[h]
\begin{center}
\begin{tabular}{p{.9cm}ccccc}
\toprule
        & TIK \cite{koshal2013} & RSSA \cite{yousefian2017} & SPRG \cite{cui2016} & SEG \cite{kannan2019}& SFB  \\ 
\midrule
\textsc{Mon.}   &   \cmark  &  \cmark & \xmark   &    \cmark  &   \xmark \\\midrule

\textsc{Res.}   & \xmark   & \xmark  & \xmark & \xmark   & \cmark\\ \midrule

\textsc{NoReg.}   & \xmark   & \xmark  & \xmark & \cmark   & \cmark\\ \midrule

\textsc{\# Prox}& 1   & 1  & 1  & 2    & 1\\\midrule

\textsc{Rate}& -   & $\mc O\left(\tfrac{1}{\sqrt{K}}\right)$  & $\mc O\left(\tfrac{1}{\sqrt{K}}\right)$  & $\mc O\left(\frac{1}{K}\right)$    & $\mc O\left(\frac{1}{K}\right)$\\
\bottomrule
\end{tabular}
\end{center}
\caption{Known algorithms that converge under only monotonicity (Mon.) are marked with \cmark, which also indicates if the property is restricted to the solution only (Res.) and if the method does not involve a regularization or relaxation step (Reg.). \# Prox indicates the number of proximal or projection steps at each iteration. The last line refers to the convergence rates. }\label{table_algo}
\end{table}

The classic SFB is known to converge for strongly monotone \cite{franci2020fbecc,rosasco2016} or cocoercive mappings \cite{franci2020fbtac}. The main result of this paper is to show that such properties only with respect to the solution are sufficient to obtain convergence. 
Specifically, our contributions are the following
\begin{itemize}
\item The SFB algorithm converges to a SNE when the pseudogradient mapping is restricted strongly monotone, using the SA scheme with only one sample of the random variable, which is a computationally light approximation.
\item The convergence result holds also if the pseudogradient is restricted strictly monotone.
\item We propose an algorithm that is distributed and, since it allows for a vanishing step size sequence, no coordination among the agents is necessary.
\item We show convergence in the case of nonsmooth cost functions, thus extending some known results to a more general case.
\item Instead of one sample of the random variable, taking a higher and time-varying number of realizations can be considered and convergence is still guaranteed. In this case, restricted cocoercivity can be used.
\end{itemize}

We remark that considering restricted properties is important for instance when a limited amount of information is available to the agents. In fact, in partial decision information generalized (S)NEPs, the properties of the operators involved hold only at the solution, even when the pseudogradient mapping is strongly monotone \cite{gadjov2019,franci2020partial}. In this case, having a restricted property applies also to mere monotonicity \cite{gadjov2019,bianchi2020}. Thus, all other algorithms in Table \ref{table_algo} cannot be directly applied.

\subsection{Notation and preliminaries}

We use Standing Assumptions to postulate technical conditions that implicitly hold throughout the paper while Assumptions are postulated only when explicitly called.

$\RR$ denotes the set of real numbers and $\bar\RR=\RR\cup\{+\infty\}$.
$\langle\cdot,\cdot\rangle:\RR^n\times\RR^n\to\RR$ denotes the standard inner product and $\norm{\cdot}$ represents the associated Euclidean norm. 
Given $N$ vectors $x_{1}, \ldots, x_{N} \in \RR^{n}$, $\boldsymbol{x} :=\op{col}\left(x_{1}, \dots, x_{N}\right)=\left[x_{1}^{\top}, \dots, x_{N}^{\top}\right]^{\top}.$

The set of fixed points of $F$ is $\op{fix}(F):=\{x\in\RR^n\mid x\in F(x)\}$. 
For a closed set $C \subseteq \RR^{n},$ the mapping $\op{proj}_{C} : \RR^{n} \to C$ denotes the projection onto $C$, i.e., $\op{proj}_{C}(x)=\op{argmin}_{y \in C}\|y-x\|$. The residual mapping is, in general, defined as $\op{res}(x^k)=\norm{x^k-\op{proj}_{C}(x^k-F(x^k))}.$ Given a proper, lower semi-continuous, convex function $g$, the subdifferential is the operator $\partial g(x):=\{u\in\Omega \mid (\forall y\in\Omega):\langle y-x,u\rangle+g(x)\leq g(y)\}$. The proximal operator is defined as $\prox_{g}(v):=\op{argmin}_{u\in\Omega}\{g(u)+\frac{1}{2}\norm{u-v}^{2}_{}\}=\mathrm{J}_{\partial g}(v)$.
$\iota_C$ is the indicator function of the set C, i.e., $\iota_C(x)=1$ if $x\in C$ and $\iota_C(x)=0$ otherwise. The set-valued mapping $\mathrm{N}_{C} : \RR^{n} \to \RR^{n}$ denotes the normal cone operator for the the set $C$ , i.e., $\mathrm{N}_{C}(x)=\varnothing$ if $x \notin C,\left\{v \in \RR^{n} | \sup _{z \in C} v^{\top}(z-x) \leq 0\right\}$ otherwise.

We now recall some basic properties of operators \cite{facchinei2007}. 
A mapping $F:\op{dom}F\subseteq\RR^n\to\RR^n$ is: $\mu$-strongly monotone with $\mu>0$ if for all $x, y \in \op{dom}(F)$ $\langle F(x)-F(y),x-y\rangle\geq\mu\normsq{x-y}$; (strictly) monotone if for all $x, y \in \op{dom}(F)$ $(x\neq y)$ $\langle F(x)-F(y),x-y\rangle\geq\,(>)\,0;$ $\beta$-cocoercive with $\beta>0$, if for all $x, y \in \op{dom}(F)$ $\langle F(x)-F(y),x-y\rangle \geq \beta\|F(x)-F(y)\|^{2};$ firmly non expansive if for all $x, y \in \op{dom}(F)$ $\|F(x)-F(y)\|^{2} \leq\|x-y\|^{2}-\|(\mathrm{Id}-F) (x)-(\mathrm{Id}-F) (y)\|^{2};$ $\ell$-Lipschitz continuous if, for some $\ell>0$ $\norm{F(x)-F(y)} \leq \ell\norm{x-y} \text { for all } x, y \in \RR^{n}.$
We use the adjective \textit{restricted} if a property holds for all $(x,y)\in\op{dom}(F)\times \op{fix}(F)$.
We note that a firmly non expansive operator is also cocoercive, hence monotone and non expansive \cite[Definition 4.1]{bau2011}.

%


\section{Stochastic Nash equilibrium problem}\label{sec_SNEPs}

In this section we describe the stochastic Nash equilibrium problem (SNEP).
We consider a set $\mc I=\{1,\dots,N\}$ of noncooperative agents who interact with the aim of minimizing their cost function. Each of them has a decision variable $x_i\in\Omega_i\subseteq\RR^{n_i}$ where $\Omega_i$ indicates the local feasible set of agent $i\in\mc I$. Let us set $n=\sum_{i\in\mc I}n_i$ and $\bs x=\op{col}(x_1,\dots,x_N)\in\bs\Omega=\Omega_1\times,\dots,\times\Omega_N\subseteq\RR^n$. The cost function of agent $i$ is defined as 
\begin{equation}\label{eq_cost}
\JJ_i(x_i,\bs{x}_{-i}):=\EE_{\xi_i}[f_i(x_i,\bs{x}_{-i},\xi_i(\omega))] + g_{i}(x_{i}),
\end{equation}
where $\bs x_{-i}=\op{col}(\{x_j\}_{j\neq i})$ collects the decision variable of all the other agents. Let $(\Xi, \mc F, \PP)$ be the probability space and $\Xi=\Xi_1\times \dots\Xi_N$. Then, $\xi_i:\Xi_i\to\RR^d$ is a random variable and $\EE_{\xi_i}$ is the mathematical expectation with respect to the distribution of $\xi_i$\footnote{From now on, we use $\xi$ instead of $\xi(\omega)$ and $\EE$ instead of $\EE_\xi$.} for all $i\in\mc I$. 

\begin{standassumption}\label{ass_cost}
For each $i \in \mc I$ and $\boldsymbol{x}_{-i} \in \Omega_{-i}$ the function $f_{i}(\cdot, \boldsymbol{x}_{-i})$ is convex and continuously differentiable.
%
For each $i\in\mc I$ and for each $\xi \in \Xi$, the function $f_{i}(\cdot,\boldsymbol x_{-i},\xi)$ is convex, $\ell_i$-Lipschitz continuous, and continuously differentiable. The function $f_{i}(x_i,\bs x_{-i},\cdot)$ is measurable and for each $\boldsymbol x_{-i}\in \Omega_{-i}$ and the Lipschitz constant $\ell_i(\boldsymbol x_{-i},\xi)$ is integrable in $\xi$.
\fineass\end{standassumption}

The cost function in \eqref{eq_cost} is given by the sum of a smooth part, i.e., the expected value of the measurable function $f_i:\RR^n\times\RR^d\to\RR$, and a non-smooth part, given by the function $g_i:\RR^{n_i}\to\RR$. The latter may represent some local constraints via an indicator function, $g_i=\iota_{\Omega_i}$ or some penalty function to enforce a desired behavior.

\begin{standassumption}\label{ass_G}
For each $i\in\mc I$, the function $g_i$ in \eqref{eq_cost} is lower semicontinuous and convex and $\op{dom}(g_{i})=\Omega_i$ is nonempty, compact and convex.
\fineass\end{standassumption}

Each agent $i\in\mc I$ aims at solving its local optimization problem 
\begin{equation}\label{eq_game}
\forall i \in \mc I: \quad \min\limits _{x_i \in \Omega_i}  \JJ_i\left(x_i, \bs{x}_{-i}\right),
\end{equation}
given the decision variable of the other agents $\bs x_{-i}$.
Specifically, the goal is to reach a stochastic Nash equilibrium (SNE), namely, a situation where none of the agents can further decrease  its cost function given the decision variables of the others. Formally, a SNE is a collective decision $\bs x^*$ such that for all $i\in\mc I$
$$\JJ_i(x_i^{*}, \boldsymbol x_{-i}^{*}) \leq \inf \{\JJ_i(y, \boldsymbol x_{-i}^{*})\; | \; y \in \Omega_i\}.$$

Since the SNE correspond to the solutions of a suitable variational problem \cite[Proposition 1.4.2]{facchinei2007}, let us define the stochastic variational inequality (SVI) associated to the game in \eqref{eq_game}. First, let us introduce the pseudogradient mapping
\begin{equation}\label{eq_grad}
\FF(\bs{x})=\op{col}\left((\EE[\nabla_{x_{i}} f_{i}(x_{i}, \bs{x}_{-i},\xi)])_{i\in\mc I}\right),
\end{equation} 
where we exchange the expected value and the gradient symbol as a consequence of Standing Assumption \ref{ass_cost} \cite[Lemma 3.4]{ravat2011}.
Then, the associated SVI reads as
\begin{equation}\label{eq_svi}
\langle \FF(\bs x^*),\bs x-\bs x^*\rangle+\sum_{i\in\mc I}\{g_i(x_i)-g_i(x^*_i)\}\geq 0,\forall  \bs x \in \bs{\Omega}.
\end{equation}
and the stochastic variational equilibrium (v-SNE) of game in (\ref{eq_game}) is defined as the solution of the $\op{SVI}(\bs\Omega , \FF,G)$ in (\ref{eq_svi}) where $\FF$ is described in (\ref{eq_grad}) and $G(\bs x)=\partial g_1(x_1)\times\dots\times\partial g_N(x_N)$. 

\section{Distributed stochastic forward-backward algorithm with one sample}
In this section, we report the assumption for convergence of our proposed algorithm and present our first two results.
The iterations of the distributed forward-backward (FB) algorithm are presented in Algorithm \ref{algo}. 

\begin{remark}
When the local cost function is the indicator function, we can use the projection on the local feasible set $\Omega_i$, instead of the proximal operator \cite[Example 12.25]{bau2011}.
\fineass\end{remark}

\begin{algorithm}[t]
\caption{Distributed Stochastic Forward--Backward}\label{algo}
Initialization: $x_i^0 \in \Omega_i$\\
Iteration $k$: Agent $i$ receives $x_j^k$ for all $j \in \mathcal{N}_{i}^{h}$, then updates:
$$x_i^k=\op{prox}_{g_i}[x_i^k-\gamma^k_{i}\hat F_{i}(x_i^k, \boldsymbol{x}_{-i}^k,\xi_i^k)]$$
\end{algorithm}

First of all, since the expected value pseudogradient may be hard or impossible to compute in closed form, we estimate $\FF$ in \eqref{eq_grad} using a stochastic approximation (SA) scheme, i.e., we take only one realization of the random variable $\xi$: 
\begin{equation}\label{eq_F_SA}
\begin{aligned}
\hat F(\bs x^k,\bs \xi^k)&=F_\textup{SA}(\bs x^k,\bs \xi^k)\\
&=\op{col}\left(\nabla_{x_1}f_1(\bs x^k,\xi^k_1),\dots,\nabla_{x_N}f_N(\bs x^k,\xi^k_N)\right),
\end{aligned}
\end{equation}
where $\bs\xi^{k} =\op{col}(\xi_1^{k},\dots,\xi_{N}^k)$ is a collection of i.i.d. random variables.
Since we take an approximation, let the stochastic error be indicated with
$$\epsilon^k=F_\textup{SA}(\bs x^k,\bs\xi^k)-\FF(\bs x^k).$$
Then, we suppose that the approximation is unbiased.

\begin{assumption}[Zero mean error]\label{ass_error}
For al $k\geq 0$, a.s. $\EEk{\epsilon^k}=0$.
\fineass\end{assumption}

Besides the zero mean, one should also consider some conditions on the variance of the error. Specifically, we postulate an assumption on the step size sequence to control the error \cite{koshal2013,koshal2010}. 
\begin{assumption}[Vanishing step size]\label{ass_step}
The step size sequence $(\gamma_k)_{k\in\NN}$ is such that
$$\sum_{k=0}^\infty\gamma_k=\infty, \;\sum_{k=0}^\infty\gamma_k^2<\infty \text{ and }\sum_{k=0}^\infty\gamma_k^2\,\EEk{\normsq{\epsilon^k}}<\infty.\vspace{-.5cm}$$
\fineass
\end{assumption}

\subsection{Restricted strongly monotone pseudogradient}
We assume that the pseudogradient mapping in \eqref{eq_grad} satisfies the restricted strongly monotone property.

\begin{assumption}[Restricted strong monotonicity]\label{ass_res_strong}
$\FF$ is restricted $\mu$-strongly monotone at $\bs x^*\in\op{SOL}(\bs\Omega,\FF,G)$, with $\mu>0$. 
\fineass\end{assumption}

We also suppose $\FF$ to be restricted Lipschitz continuous but we remark that it is not necessary to know the actual value of the Lipschitz constant since it does not affect the parameters involved in the algorithm. This is very practical, since, in general, the Lipschitz constant is not easy to compute. 
\begin{assumption}[Restricted Lipschitz continuity]\label{ass_res_lip}
$\FF$ is restricted $\ell$-Lipschitz continuous at $\bs x^*\in\op{SOL}(\bs\Omega,\FF,G)$.
\fineass\end{assumption}

We can now state our first result.
\begin{theorem}\label{theo_SA_sne_strong}
Let Assumptions \ref{ass_error}-\ref{ass_res_lip} hold. Then, the sequence $(x^k)_{k\in\NN}$ generated by Algorithm \ref{algo}, with approximation $\hat F=F_{\textup{SA}}$ as in \eqref{eq_F_SA}, converges to a v-SNE of the game in \eqref{eq_game}.
\end{theorem}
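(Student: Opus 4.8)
The plan is to recast Algorithm \ref{algo} as a stochastic fixed-point iteration and then apply a Robbins--Siegmund-type supermartingale convergence argument, specialized to exploit the restricted strong monotonicity only at the solution $\bs x^*$. First I would note that, by the characterization of the solutions of $\op{SVI}(\bs\Omega,\FF,G)$ in \eqref{eq_svi} through the normal cone, $\bs x^*$ is a v-SNE if and only if $\bs x^* = \prox_{g}[\bs x^* - \gamma \FF(\bs x^*)]$ for any $\gamma>0$, where $\prox_g = \op{col}(\prox_{g_1},\dots,\prox_{g_N})$; this identifies v-SNE with fixed points of the (deterministic) forward-backward operator. Writing the iteration in \eqref{eq_F_SA} as $\bs x^{k+1} = \prox_{g}[\bs x^k - \gamma^k(\FF(\bs x^k) + \epsilon^k)]$, I would expand $\normsq{\bs x^{k+1}-\bs x^*}$ using the nonexpansiveness (indeed firm nonexpansiveness) of $\prox_g$ from the preliminaries, obtaining
\[
\normsq{\bs x^{k+1}-\bs x^*} \le \normsq{\bs x^k-\bs x^*} - 2\gamma^k\langle \FF(\bs x^k), \bs x^k-\bs x^*\rangle - 2\gamma^k\langle\epsilon^k,\bs x^k-\bs x^*\rangle + (\gamma^k)^2\normsq{\FF(\bs x^k)+\epsilon^k},
\]
possibly after also using the subgradient inequality for $g$ to absorb the proximal terms correctly.

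The second step is to take conditional expectation given $\mc F^k$ and use Assumption \ref{ass_error} (so the cross term with $\epsilon^k$ vanishes in expectation) and split the last term into $\FF(\bs x^k)$ and $\epsilon^k$ contributions. Here the key leverage point is that restricted strong monotonicity (Assumption \ref{ass_res_strong}) gives $\langle\FF(\bs x^k),\bs x^k-\bs x^*\rangle = \langle\FF(\bs x^k)-\FF(\bs x^*),\bs x^k-\bs x^*\rangle + \langle\FF(\bs x^*),\bs x^k-\bs x^*\rangle \ge \mu\normsq{\bs x^k-\bs x^*} + \langle\FF(\bs x^*),\bs x^k-\bs x^*\rangle$, and the last inner product is $\ge -\sum_i\{g_i(x_i^k)-g_i(x_i^*)\}$ by the SVI; the $g$ terms should cancel against what the prox expansion produced. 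For the quadratic term $\normsq{\FF(\bs x^k)}$, I would bound it via restricted Lipschitz continuity (Assumption \ref{ass_res_lip}): $\norm{\FF(\bs x^k)} \le \norm{\FF(\bs x^k)-\FF(\bs x^*)} + \norm{\FF(\bs x^*)} \le \ell\norm{\bs x^k-\bs x^*} + \norm{\FF(\bs x^*)}$, so $\normsq{\FF(\bs x^k)} \le 2\ell^2\normsq{\bs x^k-\bs x^*} + 2\normsq{\FF(\bs x^*)}$. Collecting terms yields
\[
\EEk{\normsq{\bs x^{k+1}-\bs x^*}} \le (1 - 2\mu\gamma^k + 2\ell^2(\gamma^k)^2)\normsq{\bs x^k-\bs x^*} + (\gamma^k)^2\left(2\normsq{\FF(\bs x^*)} + \EEk{\normsq{\epsilon^k}}\right),
\]
which is exactly in Robbins--Siegmund form: the deterministic summable perturbation is $2(\gamma^k)^2\normsq{\FF(\bs x^*)}$, the stochastic summable term is $(\gamma^k)^2\EEk{\normsq{\epsilon^k}}$ (summable by Assumption \ref{ass_step}), and the contraction defect $2\mu\gamma^k$ satisfies $\sum_k \gamma^k = \infty$. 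The Robbins--Siegmund lemma then gives that $\normsq{\bs x^k-\bs x^*}$ converges a.s. and $\sum_k \gamma^k\normsq{\bs x^k-\bs x^*}<\infty$ a.s., and the latter together with $\sum_k\gamma^k=\infty$ forces $\liminf_k\normsq{\bs x^k-\bs x^*}=0$; combined with a.s. convergence of the sequence $\normsq{\bs x^k-\bs x^*}$, this yields $\bs x^k\to\bs x^*$ a.s.

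The main obstacle I anticipate is the bookkeeping in the prox-expansion step: getting the $g_i$-difference terms to appear with the right sign so that they cancel exactly against the $\langle\FF(\bs x^*),\bs x^k-\bs x^*\rangle$ term coming from the SVI characterization of $\bs x^*$. This requires using the firm nonexpansiveness of $\prox_g$ (or equivalently that $\op{Id}-\prox_g$ is the resolvent-complement, and the three-point inequality for proximal operators) rather than mere nonexpansiveness, and being careful that the stochastic gradient $F_{\textup{SA}}$, not the true $\FF$, is what enters the prox. A secondary technical point is that $\bs\Omega$ compact (Standing Assumption \ref{ass_G}) guarantees $\normsq{\FF(\bs x^*)}$ and the iterates stay bounded, which keeps all the constants finite; one should also confirm a priori existence of a v-SNE (which follows from compactness and continuity via Brouwer/Kakutani applied to the VI, or is implicitly assumed since $\op{SOL}(\bs\Omega,\FF,G)\ni\bs x^*$ is referenced in Assumptions \ref{ass_res_strong}--\ref{ass_res_lip}).
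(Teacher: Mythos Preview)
Your proposal is correct and follows essentially the same route as the paper: nonexpansiveness of the proximal operator, restricted strong monotonicity for the cross term, restricted Lipschitz continuity for the quadratic term, and then the Robbins--Siegmund lemma to conclude $\normsq{\bs x^k-\bs x^*}\to 0$ a.s. The only difference is that the paper sidesteps the $g_i$-term bookkeeping you flag as your ``main obstacle'' by applying nonexpansiveness directly between $\prox_g[\bs x^k-\gamma_k\hat F(\bs x^k,\bs\xi^k)]$ and $\prox_g[\bs x^*-\gamma_k\FF(\bs x^*)]$ (using the fixed-point characterization you already state in your first paragraph), which yields the cleaner inequality $\EEk{\normsq{\bs x^{k+1}-\bs x^*}}\le(1+\gamma_k^2\ell^2)\normsq{\bs x^k-\bs x^*}-2\mu\gamma_k\normsq{\bs x^k-\bs x^*}+\gamma_k^2\EEk{\normsq{\epsilon^k}}$ without the factor of $2$ or the $\normsq{\FF(\bs x^*)}$ remainder; your version works too, but the detour through the SVI inequality and subgradient cancellation is unnecessary.
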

\begin{proof}
See Appendix \ref{sec_proofs_SNE}.
\end{proof}

\begin{remark}\label{rem_strong}
A similar result can be found in \cite{jiang2008} for strongly monotone, Lipschitz continuous pseudogradient mappings on the whole feasible set, not just with respect to the solution. Moreover, Theorem \ref{theo_SA_sne_strong} extends that result to the case of nonsmooth cost functions and, compared to \cite{jiang2008}, it does not require a bound on the step size.
\end{remark}

\subsection{Restricted strictly monotone pseudogradient}

The strong monotonicity in Assumption \ref{ass_res_strong} can be replaced with restricted strict monotonicity to prove convergence, a case that we analyze in this subsection.
\begin{assumption}[Restricted strict monotonicity]\label{ass_res_strict}
$\FF$ is restricted strictly monotone at $\bs x^*\in\op{SOL}(\bs\Omega,\FF,G)$.
\fineass\end{assumption}
Also in this case, we assume that $\FF$ is restricted Lipschitz continuous (Assumption \ref{ass_res_lip}) but neither in this case it is necessary to know the value of the Lipschitz constant. 
\begin{theorem}\label{theo_SA_sne_strict}
Let Assumptions \ref{ass_error}, \ref{ass_step}, \ref{ass_res_lip} and \ref{ass_res_strict} hold. Then, the sequence $(x^k)_{k\in\NN}$ generated by Algorithm \ref{algo} with approximation $\hat F=F_{\textup{SA}}$ as in \eqref{eq_F_SA} converges a.s. to a $\mathrm{v}$-$\mathrm{SNE}$ of the game in \eqref{eq_game}.
\end{theorem}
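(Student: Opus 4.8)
The plan is to adapt the stochastic quasi-Fej\'er / Robbins--Siegmund argument used for Theorem~\ref{theo_SA_sne_strong}, replacing the contraction estimate coming from strong monotonicity by a limiting argument built on strict monotonicity. First I would fix a v-SNE $\bs x^*\in\op{SOL}(\bs\Omega,\FF,G)$ and set $V^k:=\normsq{\bs x^k-\bs x^*}$. Expanding the proximal update $\bs x^{k+1}=\prox_{\bs g}[\bs x^k-\gamma_k(\FF(\bs x^k)+\epsilon^k)]$ and using firm nonexpansiveness of $\prox_{\bs g}$ together with the fact that $\bs x^*=\prox_{\bs g}[\bs x^*-\gamma_k\FF(\bs x^*)]$ (the fixed-point characterization of the $\op{SVI}$), one obtains the standard one-step inequality
\begin{equation*}
\EEk{V^{k+1}}\le V^k-2\gamma_k\langle\FF(\bs x^k)-\FF(\bs x^*),\bs x^k-\bs x^*\rangle+\gamma_k^2\left(\ell^2V^k+\EEk{\normsq{\epsilon^k}}\right)+\text{(cross terms that vanish under Assumption~\ref{ass_error})},
\end{equation*}
where restricted Lipschitz continuity (Assumption~\ref{ass_res_lip}) bounds $\normsq{\FF(\bs x^k)-\FF(\bs x^*)}$ by $\ell^2V^k$. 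By Assumption~\ref{ass_res_strict} the inner product term is nonnegative, so after absorbing the $\gamma_k^2\ell^2V^k$ term (legitimate since $\sum\gamma_k^2<\infty$) the Robbins--Siegmund lemma applies: $V^k$ converges a.s.\ to a finite random variable, the iterates are a.s.\ bounded, and $\sum_k\gamma_k\langle\FF(\bs x^k)-\FF(\bs x^*),\bs x^k-\bs x^*\rangle<\infty$ a.s.

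The second step is to extract a cluster point that is itself a solution. Since $\sum_k\gamma_k=\infty$ and the above series converges, along a subsequence $\langle\FF(\bs x^{k_j})-\FF(\bs x^*),\bs x^{k_j}-\bs x^*\rangle\to 0$; by boundedness, pass to a further subsequence with $\bs x^{k_j}\to\bar{\bs x}$. Restricted strict monotonicity evaluated at $\bar{\bs x}$ and $\bs x^*$ then forces $\bar{\bs x}=\bs x^*$: if $\bar{\bs x}\neq\bs x^*$ the inner product would have a strictly positive limit, a contradiction. Thus every convergent subsequence of $(\bs x^k)$ has limit $\bs x^*$. Finally, because $V^k=\normsq{\bs x^k-\bs x^*}$ converges a.s.\ and has a subsequence tending to $0$, the whole limit is $0$, i.e.\ $\bs x^k\to\bs x^*$ a.s.

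The main obstacle, compared with the strongly monotone case, is that strict monotonicity gives no quantitative rate, so one cannot close the argument by a direct contraction; the delicate point is making the subsequence-to-full-sequence passage rigorous. The key is exactly that $V^k$ itself is known to converge (from Robbins--Siegmund), which upgrades "some subsequence converges to $\bs x^*$'' to "the whole sequence converges to $\bs x^*$.'' A secondary technical care is needed to handle the nonsmooth term $\bs g$ cleanly inside the expansion of $V^{k+1}$ — here one uses the inequality $\langle\prox_{\bs g}(u)-\prox_{\bs g}(v),u-v\rangle\ge\normsq{\prox_{\bs g}(u)-\prox_{\bs g}(v)}$ and the subgradient inequality for $\bs g$ to absorb the $g_i$ differences, exactly as in the proof of Theorem~\ref{theo_SA_sne_strong}, so most of that machinery can be reused verbatim. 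Measurability of the subsequence extraction and the conditional-expectation bookkeeping (filtration $\mc F^k$, Assumption~\ref{ass_error}, and the variance control in Assumption~\ref{ass_step}) proceed as in the earlier proof.
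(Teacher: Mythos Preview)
Your proposal is correct and follows essentially the same route as the paper: derive the one-step inequality via nonexpansiveness of the proximal operator and restricted Lipschitz continuity, apply the Robbins--Siegmund lemma to obtain a.s.\ boundedness and summability of $\gamma_k\langle\FF(\bs x^k)-\FF(\bs x^*),\bs x^k-\bs x^*\rangle$, extract a cluster point where the inner product vanishes, and use restricted strict monotonicity to identify it with $\bs x^*$. Your write-up is in fact more careful than the paper's sketch, which omits the explicit subsequence-to-full-sequence step that you spell out.
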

\begin{proof}
See Appendix \ref{app_strict}.
\end{proof}
\begin{remark}
Strong monotonicity (Assumption \ref{ass_res_strong}) implies strict monotonicity (Assumption \ref{ass_res_strict}). Hence, Theorem \ref{theo_SA_sne_strict} improves Theorem \ref{theo_SA_sne_strong}.
\end{remark}
\begin{remark}
For strictly monotone and Lipschitz continuous pseudogradient mappings on the whole feasible set, a similar result can be found in \cite{koshal2010}. Moreover, Theorem \ref{theo_SA_sne_strict} extends that result to the case of nonsmooth cost functions.
\end{remark}

\section{Convergence with many samples}
Convergence of Algorithm \ref{algo} can be proven also using a different approximation scheme.

Instead of taking only one sample per iteration, let us consider the possibility of having more, even changing at each iteration. Hence, let us consider to have $S^k$ samples of the random variable $\xi$, called the batch size, and to be able to compute an approximation of $\FF(\bs x)$ of the form
\begin{equation}\label{eq_F_VR}
\begin{aligned}
\hat F(\bs x,\bs\xi)&=F_\textup{VR}(\bs x,\bs\xi)\\
&=\op{col}\left(\frac{1}{S^k} \sum_{t=1}^{S^k} \nabla_{x_i}f_i(\bs x,\xi_i^{(t)})\right)_{i\in\mc I}\\
&=\frac{1}{S^k} \sum_{t=1}^{S^k} F_\textup{SA}(\bs x,\bar\xi^{(t)})
\end{aligned}
\end{equation}
where $\bs \xi=\op{col}(\bar \xi^{(1)},\dots,\bar\xi^{(S^k)})$, for all $t=1,\dots,S^k$ $\bar\xi^{(t)}=\op{col}(\xi_1^{(t)},\dots,\xi_N^{(t)})$ and $\bs \xi$ is an i.i.d. sequence of random variables.
Approximations of the form (\ref{eq_F_VR}), where a certain number of samples is available, are common, for instance, in generative adversarial networks \cite{franci2020gan, gidel2018}. 
The subscript VR in equation \eqref{eq_F_VR} stands for \textit{variance reduction}, which is a consequence of taking an increasing batch size.
\begin{assumption}[Batch size]\label{ass_batch}
The batch size sequence $(S^k)_{k\geq 1}$ is such that, for some $c,k_0, a>0$,
$S^k\geq c(k+k_0)^{a+1}.$
\end{assumption}
This assumption implies that $1/S^k$ is summable, which is a standard assumption in this case. It is often used in combination with the assumption of uniform bounded variance \cite{bot2020,iusem2017, franci2020fbtac}. 
\begin{assumption}[Uniform bounded variance]\label{ass_variance}
For all $\bs x\in\bs \Omega$ and for some $\sigma>0$,
$\EE[\norm{F_\textup{SA}(\bs x, \cdot)-\FF(\bs x)}^{2}]\leq \sigma^2.$
\fineass\end{assumption}

\begin{remark}
It follows from Assumptions \ref{ass_batch} and \ref{ass_variance} \cite[Lemma 6]{franci2020fbtac} that for all $k\in\NN$ and some $c>0$
$$\EE[\|\epsilon^k\|^2|\mc F^k]\leq\frac{c\sigma^2}{S^k} \text{ a.s.}.$$
\end{remark}

\begin{assumption}[Step size bound]\label{ass_step_strong}
The step size sequence $(\gamma_k)_{k\in\NN}$ is such that, for all $k$, $\gamma_k\leq\frac{2\mu}{\ell^2}$ where $\mu$ is the strong monotonicity constant of $\FF$ as in Assumption \ref{ass_res_strong} and $\ell$ is its Lipschitz constant as in Assumption \ref{ass_res_lip}.
\end{assumption}
\begin{theorem}\label{theo_strong_vr}
Let Assumptions \ref{ass_error}, \ref{ass_res_strong}, \ref{ass_res_lip} and \ref{ass_batch}-\ref{ass_step_strong} hold. Then,  the sequence $(x^k)_{k\in\NN}$ generated by Algorithm \ref{algo} with approximation $\hat F=F_{\textup{VR}}$ as in \eqref{eq_F_VR} converges a.s. to a v-SNE of the game in \eqref{eq_game}.
\end{theorem}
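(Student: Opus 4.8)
The plan is to run a stochastic Fejér / quasi-Fejér argument in the spirit of Robbins--Siegmund, centered at a fixed v-SNE $\bs x^*\in\op{SOL}(\bs\Omega,\FF,G)$. Writing the update of Algorithm \ref{algo} compactly as $\bs x^{k+1}=\prox_{g}[\bs x^k-\gamma_k(\FF(\bs x^k)+\epsilon^k)]$ with $\prox_g=\op{col}(\prox_{g_i})_i$, I would first use the fact that $\bs x^*$ is a fixed point of $\bs x\mapsto\prox_{g}[\bs x-\gamma_k\FF(\bs x)]$ (this is exactly the characterization of the $\op{SVI}(\bs\Omega,\FF,G)$ via the proximal/normal-cone inclusion), together with firm nonexpansiveness of $\prox_g$, to obtain the basic inequality
$$\normsq{\bs x^{k+1}-\bs x^*}\le\normsq{\bs x^k-\bs x^*}-2\gamma_k\langle\FF(\bs x^k),\bs x^k-\bs x^*\rangle+\gamma_k^2\normsq{\FF(\bs x^k)+\epsilon^k}-\|\bs x^{k+1}-\bs x^k+\gamma_k(\FF(\bs x^k)+\epsilon^k)\|^2,$$
possibly after absorbing the last term. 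The key point is that everything is evaluated at the pair $(\bs x^k,\bs x^*)$, so only the \emph{restricted} properties (Assumptions \ref{ass_res_strong} and \ref{ass_res_lip}) are needed: restricted strong monotonicity gives $\langle\FF(\bs x^k),\bs x^k-\bs x^*\rangle=\langle\FF(\bs x^k)-\FF(\bs x^*),\bs x^k-\bs x^*\rangle+\langle\FF(\bs x^*),\bs x^k-\bs x^*\rangle\ge\mu\normsq{\bs x^k-\bs x^*}+\langle\FF(\bs x^*),\bs x^k-\bs x^*\rangle$, and the inner product $\langle\FF(\bs x^*),\bs x^k-\bs x^*\rangle$ is handled by the variational inequality (it contributes a nonnegative term up to the $g$-differences, which telescope against the prox characterization).

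Next I would take conditional expectation given $\mc F^k$. By Assumption \ref{ass_error} the cross term $\gamma_k\langle\cdot,\epsilon^k\rangle$ vanishes, and $\gamma_k^2\,\EEk{\normsq{\FF(\bs x^k)+\epsilon^k}}\le 2\gamma_k^2\normsq{\FF(\bs x^k)}+2\gamma_k^2\,\EEk{\normsq{\epsilon^k}}$. Here the variance-reduction structure enters: by the Remark following Assumption \ref{ass_variance}, $\EEk{\normsq{\epsilon^k}}\le c\sigma^2/S^k$, and Assumption \ref{ass_batch} makes $\sum_k\gamma_k^2/S^k<\infty$ (indeed $1/S^k$ alone is summable). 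For the term $\gamma_k^2\normsq{\FF(\bs x^k)}$ I would use restricted Lipschitz continuity, $\normsq{\FF(\bs x^k)}=\normsq{\FF(\bs x^k)-\FF(\bs x^*)+\FF(\bs x^*)}\le 2\ell^2\normsq{\bs x^k-\bs x^*}+2\normsq{\FF(\bs x^*)}$; the first piece combines with the $-2\gamma_k\mu\normsq{\bs x^k-\bs x^*}$ term, and here Assumption \ref{ass_step_strong} ($\gamma_k\le 2\mu/\ell^2$) is precisely what is needed so that $-2\gamma_k\mu+2\gamma_k^2\ell^2\le 0$, leaving a clean contraction-type estimate
$$\EEk{\normsq{\bs x^{k+1}-\bs x^*}}\le(1-2\gamma_k\mu+\gamma_k^2\cdot 2\ell^2)\normsq{\bs x^k-\bs x^*}+\text{(summable terms)}\le\normsq{\bs x^k-\bs x^*}+\theta_k,$$
with $\sum_k\theta_k<\infty$ a.s.

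Then the Robbins--Siegmund lemma yields that $\normsq{\bs x^k-\bs x^*}$ converges a.s. to a finite random variable and, crucially, $\sum_k\gamma_k\mu\normsq{\bs x^k-\bs x^*}<\infty$ a.s.; since $\sum_k\gamma_k=\infty$, this forces $\liminf_k\normsq{\bs x^k-\bs x^*}=0$ a.s. Combined with the a.s. convergence of $\normsq{\bs x^k-\bs x^*}$, we get $\bs x^k\to\bs x^*$ a.s. (Strong monotonicity makes $\bs x^*$ the unique v-SNE, so there is no subsequence/Opial subtlety here.) I expect the main obstacle to be bookkeeping rather than conceptual: carefully isolating the $\langle\FF(\bs x^*),\bs x^k-\bs x^*\rangle$ and the $g_i$-difference terms from the prox step so that the VI inequality \eqref{eq_svi} can be invoked to discard them, and verifying that the residual $\prox$-term can be dropped (or used) without sign trouble; once the deterministic one-step inequality is in the form above, the stochastic part is a routine application of Robbins--Siegmund using Assumptions \ref{ass_error}, \ref{ass_batch}, \ref{ass_variance}. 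The proof deferred to the appendix presumably follows exactly this route.
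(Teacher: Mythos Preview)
Your overall strategy---derive a one-step inequality for $\normsq{\bs x^{k+1}-\bs x^*}$ and feed it to Robbins--Siegmund---is exactly what the paper does. However, your derivation of the one-step inequality is more convoluted than necessary and, as written, contains a gap that matters precisely in the variance-reduced setting of this theorem.

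The issue is your treatment of $\gamma_k^2\normsq{\FF(\bs x^k)+\epsilon^k}$. You bound $\normsq{\FF(\bs x^k)}\le 2\ell^2\normsq{\bs x^k-\bs x^*}+2\normsq{\FF(\bs x^*)}$, which leaves a residual $2\gamma_k^2\normsq{\FF(\bs x^*)}$. For Robbins--Siegmund you need this to be summable, but Assumption~\ref{ass_step} ($\sum_k\gamma_k^2<\infty$) is \emph{not} among the hypotheses of Theorem~\ref{theo_strong_vr}: only the upper bound $\gamma_k\le 2\mu/\ell^2$ (Assumption~\ref{ass_step_strong}) is assumed, and the whole point of the variance-reduced scheme is to allow a constant step size. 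With $\gamma_k\equiv\gamma>0$ your residual term is not summable and the argument breaks. The same expansion also costs you a factor of $2$ in front of $\ell^2$, so the step-size condition you would actually need is $\gamma_k\le \mu/\ell^2$, half of what Assumption~\ref{ass_step_strong} grants.

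The paper avoids all of this by using the fixed-point identity $\bs x^*=\prox_g[\bs x^*-\gamma_k\FF(\bs x^*)]$ together with plain nonexpansiveness of $\prox_g$ (not firm nonexpansiveness, and no three-point prox inequality). Then
\[
\normsq{\bs x^{k+1}-\bs x^*}\le \normsq{\bs x^k-\bs x^*-\gamma_k(\FF(\bs x^k)-\FF(\bs x^*)+\epsilon^k)},
\]
so after expanding and taking $\EEk{\cdot}$ one gets directly $-2\gamma_k\langle\FF(\bs x^k)-\FF(\bs x^*),\bs x^k-\bs x^*\rangle$ and $\gamma_k^2\normsq{\FF(\bs x^k)-\FF(\bs x^*)}$, with no $\FF(\bs x^*)$ leftover and no need to invoke the SVI inequality or track the $g_i$-differences. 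Restricted strong monotonicity and restricted Lipschitz continuity then give $(\gamma_k^2\ell^2-2\gamma_k\mu)\normsq{\bs x^k-\bs x^*}$, which is nonpositive under Assumption~\ref{ass_step_strong}, and the only ``noise'' term is $\gamma_k^2\EEk{\normsq{\epsilon^k}}\le \gamma_k^2 c\sigma^2/S^k$, summable by Assumption~\ref{ass_batch}. Your detour through $\langle\FF(\bs x^*),\bs x^k-\bs x^*\rangle$ and the $g$-terms is therefore unnecessary; the ``bookkeeping obstacle'' you anticipate simply does not arise if you set up the comparison point correctly.
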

\begin{proof}
See Appendix \ref{app_vr}.
\end{proof}
Let us note that an operator which is $\mu$-strongly monotone and $\ell$-Lipschitz continuous is also $\frac{\mu}{\ell^2}$-cocoercive, hence, we make the following assumption.
\begin{assumption}[Restricted cocoercivity]\label{ass_res_coco}
$\FF$ is restricted $\beta$-cocoercive at $\bs x^*\in\op{SOL}(\bs\Omega,\FF,G)$, with $\beta>0$. 
\end{assumption}
Let us also generalize the bound on the step size (Assumption \ref{ass_step_strong}) to this case.
\begin{assumption}[Step size bound]\label{ass_bound_step}
The step size sequence $(\gamma_k)_{k\in\NN}$ is such that, for all $k$, $\gamma_k\leq2\beta$ where $\beta$ is the cocoercivity constant of $\FF$ as in Assumption \ref{ass_res_coco}.
\fineass\end{assumption}
We note that with weaker assumptions, the FB algorithm may end in cycling behaviors and never reach a solution \cite{mescheder2018,gidel2018,grammatico2018}. Remarkably, it is sufficient for convergence that cocoercivity holds only with respect to the solution.
In this case, Lipschitz continuity is not necessary, however, a $\beta$-cocoercive mapping is also $\frac{1}{\beta}$-Lipschitz continuous.
\begin{corollary}\label{theo_VR_sne_coco}
Let Assumptions \ref{ass_error},  \ref{ass_batch}, \ref{ass_variance}, \ref{ass_res_coco} and \ref{ass_bound_step} hold. Then,  the sequence $(x^k)_{k\in\NN}$ generated by Algorithm \ref{algo} with approximation $\hat F=F_{\textup{VR}}$ as in \eqref{eq_F_VR} converges to a v-SNE of the game in \eqref{eq_game}.
\end{corollary}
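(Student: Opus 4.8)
The plan is to analyse Algorithm~\ref{algo} with $\hat F=F_{\textup{VR}}$ as a stochastically perturbed fixed-point iteration of the forward--backward operator $T_{\gamma}:=\prox_{g}\circ(\mathrm{Id}-\gamma\FF)$, mirroring the proof of Theorem~\ref{theo_strong_vr} but invoking restricted cocoercivity directly in place of restricted strong monotonicity and Lipschitz continuity. First I would record the fixed-point reformulation: for any $\gamma>0$, a collective decision $\bs x^{*}$ is a v-SNE of \eqref{eq_game} if and only if $\bs x^{*}=\prox_{g}(\bs x^{*}-\gamma\FF(\bs x^{*}))$, i.e.\ $\bs x^{*}\in\op{fix}(T_{\gamma})=\op{SOL}(\bs\Omega,\FF,G)$; such a point exists by Standing Assumptions~\ref{ass_cost}--\ref{ass_G} ($\bs\Omega$ compact and convex, $\FF$ continuous). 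Writing $\hat F^{k}=F_{\textup{VR}}(\bs x^{k},\bs\xi^{k})=\FF(\bs x^{k})+\epsilon^{k}$, nonexpansiveness of $\prox_{g}$ and $\bs x^{*}=\prox_{g}(\bs x^{*}-\gamma_{k}\FF(\bs x^{*}))$ give
\[
\normsq{\bs x^{k+1}-\bs x^{*}}\le\normsq{\bs x^{k}-\gamma_{k}(\FF(\bs x^{k})+\epsilon^{k})-\bs x^{*}+\gamma_{k}\FF(\bs x^{*})},
\]
which I would expand into $\normsq{\bs x^{k}-\bs x^{*}}$, a quadratic term in $\FF(\bs x^{k})-\FF(\bs x^{*})$, a term linear in $\epsilon^{k}$, and $\gamma_{k}^{2}\normsq{\epsilon^{k}}$.

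The only place Assumptions~\ref{ass_res_coco} and \ref{ass_bound_step} enter is to bound $-2\gamma_{k}\langle\bs x^{k}-\bs x^{*},\FF(\bs x^{k})-\FF(\bs x^{*})\rangle+\gamma_{k}^{2}\normsq{\FF(\bs x^{k})-\FF(\bs x^{*})}$ above, using restricted $\beta$-cocoercivity at $\bs x^{*}$ and $\gamma_{k}\le 2\beta$, by the nonpositive quantity $-\gamma_{k}(2\beta-\gamma_{k})\normsq{\FF(\bs x^{k})-\FF(\bs x^{*})}$. Taking $\EEk{\cdot}$, the zero-mean property (Assumption~\ref{ass_error}) annihilates the term linear in $\epsilon^{k}$ since $\bs x^{k}$ and $\FF(\bs x^{k})$ are $\mc F^{k}$-measurable, while Assumptions~\ref{ass_batch}--\ref{ass_variance} give $\EEk{\normsq{\epsilon^{k}}}\le c\sigma^{2}/S^{k}$; hence
\[
\EEk{\normsq{\bs x^{k+1}-\bs x^{*}}}\le\normsq{\bs x^{k}-\bs x^{*}}-\gamma_{k}(2\beta-\gamma_{k})\normsq{\FF(\bs x^{k})-\FF(\bs x^{*})}+\frac{c\sigma^{2}\gamma_{k}^{2}}{S^{k}}.
\]
Since $\gamma_{k}\le 2\beta$ and $\sum_{k}1/S^{k}<\infty$ by Assumption~\ref{ass_batch}, the last term is summable, so the Robbins--Siegmund supermartingale lemma yields, for every $\bs x^{*}\in\op{SOL}(\bs\Omega,\FF,G)$, that $\normsq{\bs x^{k}-\bs x^{*}}$ converges a.s.\ and $\sum_{k}\gamma_{k}(2\beta-\gamma_{k})\normsq{\FF(\bs x^{k})-\FF(\bs x^{*})}<\infty$ a.s. (Restricted Lipschitz continuity of $\FF$, wherever it is used, comes for free: a $\beta$-cocoercive mapping is $\tfrac{1}{\beta}$-Lipschitz.)

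It remains to identify the limit. Since $\bs x^{k}\in\bs\Omega$ is bounded, along almost every sample path there is a subsequence $\bs x^{k_{n}}\to\bar{\bs x}$, and I would show $\bar{\bs x}\in\op{SOL}(\bs\Omega,\FF,G)$: retaining also the firm-nonexpansiveness residual term of $\prox_{g}$ in the recursion above (equivalently, exploiting that $T_{\gamma_{k}}$ is restricted averaged at $\bs x^{*}$ when $\gamma_{k}$ stays in a compact subinterval of $(0,2\beta)$) gives $\sum_{k}\normsq{\bs x^{k}-T_{\gamma_{k}}\bs x^{k}}<\infty$ a.s., hence $\bs x^{k}-T_{\gamma_{k}}\bs x^{k}\to0$; passing to a further subsequence along which $\gamma_{k_{n}}\to\bar\gamma>0$ and using continuity of $\FF$ and $\prox_{g}$ gives $\bar{\bs x}=\prox_{g}(\bar{\bs x}-\bar\gamma\FF(\bar{\bs x}))$, i.e.\ $\bar{\bs x}$ is a v-SNE. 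Applying the convergence of $\normsq{\bs x^{k}-\bs x^{*}}$ with $\bs x^{*}=\bar{\bs x}$ then forces the whole sequence to converge a.s.\ to $\bar{\bs x}$. I expect this last step to be the main obstacle: extracting a vanishing fixed-point residual from the perturbed, time-varying iteration and handling the varying step size $\gamma_{k}$ cleanly — in particular keeping $\gamma_{k}$ bounded away from $0$ and from $2\beta$, which is exactly what makes the forward--backward operator restricted averaged and lets the cluster-point argument close.
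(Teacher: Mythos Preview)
The paper's own proof is a one-line citation: ``It follows from \cite[Theorem 1]{franci2020fbtac}.'' No self-contained argument is given. Your proposal spells out what is presumably the content of that cited result: the standard stochastic Fej\'er-monotonicity analysis of forward--backward under (restricted) cocoercivity with variance reduction. The recursion you derive via nonexpansiveness of $\prox_g$, the use of restricted $\beta$-cocoercivity to produce the term $-\gamma_k(2\beta-\gamma_k)\normsq{\FF(\bs x^k)-\FF(\bs x^*)}$, the annihilation of the martingale increment by Assumption~\ref{ass_error}, the bound $\EEk{\normsq{\epsilon^k}}\le c\sigma^2/S^k$ from Assumptions~\ref{ass_batch}--\ref{ass_variance}, and the appeal to Robbins--Siegmund are all correct and standard.

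Your caveat about the limit-identification step is well placed, and it is worth naming precisely. First, the residual you actually obtain from firm nonexpansiveness is $\normsq{\bs x^k-\bs x^{k+1}-\gamma_k(\hat F^k-\FF(\bs x^*))}$, not $\normsq{\bs x^k-T_{\gamma_k}\bs x^k}$; passing from one to the other requires $\epsilon^k\to 0$ a.s.\ (which does follow from summability of $\EE\normsq{\epsilon^k}$) and $\FF(\bs x^k)-\FF(\bs x^*)\to 0$ (which needs the cocoercivity term to be effective). Second, Assumption~\ref{ass_bound_step} as stated only asks $\gamma_k\le 2\beta$; to make both the averagedness argument and the subsequence extraction with $\gamma_{k_n}\to\bar\gamma>0$ go through you indeed need $\gamma_k$ confined to a compact subinterval of $(0,2\beta)$. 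That strengthening is not written in the paper's assumption but is almost certainly present (or implicit) in the cited reference; it is the only genuine loose end in your outline. With that in place, your argument closes exactly as you describe: cluster points satisfy $\bar{\bs x}=\prox_g(\bar{\bs x}-\bar\gamma\FF(\bar{\bs x}))$, hence are v-SNE, and convergence of $\normsq{\bs x^k-\bar{\bs x}}$ forces the whole sequence to converge.
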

\begin{proof}
It follows from \cite[Theorem 1]{franci2020fbtac}.
\end{proof}

Theorem \ref{theo_SA_sne_strict} can be extended to this approximation scheme.
\begin{corollary}\label{cor_strict}
Let Assumptions \ref{ass_error}, \ref{ass_step}, \ref{ass_res_lip} and \ref{ass_res_strict} hold. Then, the sequence $(\bs x^K)_{k\in\NN}$ generated by Algorithm \ref{algo} with approximation $\hat F=F_{\textup{VR}}$ as in \eqref{eq_F_VR} converges to a v-SNE of the game in \eqref{eq_game}.\fineass
\end{corollary}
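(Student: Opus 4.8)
The plan is to mimic the proof of Theorem~\ref{theo_SA_sne_strict} (restricted strict monotonicity with one sample and vanishing step size) and check that every step survives the substitution of $F_\textup{SA}$ by $F_\textup{VR}$. The key observation is that $F_\textup{VR}(\bs x,\bs\xi)=\frac{1}{S^k}\sum_{t=1}^{S^k}F_\textup{SA}(\bs x,\bar\xi^{(t)})$ is itself an unbiased estimator of $\FF(\bs x)$ under Assumption~\ref{ass_error}, so the error $\epsilon^k=F_\textup{VR}(\bs x^k,\bs\xi^k)-\FF(\bs x^k)$ still satisfies $\EEk{\epsilon^k}=0$ a.s. Hence the whole stochastic machinery (Robbins--Siegmund / supermartingale convergence) that underlies Theorem~\ref{theo_SA_sne_strict} applies verbatim, provided the step-size/variance condition $\sum_k \gamma_k^2\,\EEk{\normsq{\epsilon^k}}<\infty$ in Assumption~\ref{ass_step} continues to hold. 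Since averaging over $S^k\ge 1$ samples can only decrease the conditional variance, $\EEk{\normsq{\epsilon^k}}$ for $F_\textup{VR}$ is no larger than for $F_\textup{SA}$, so Assumption~\ref{ass_step} as stated is in fact \emph{easier} to satisfy here and the hypothesis is legitimate.

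First I would write down the basic iteration inequality: using the firm nonexpansiveness of $\prox_{g_i}$ (equivalently, the characterization of the prox step and restricted monotonicity of $\FF$ at $\bs x^*$), derive for any $\bs x^*\in\op{SOL}(\bs\Omega,\FF,G)$ the one-step estimate
\begin{equation*}
\normsq{\bs x^{k+1}-\bs x^*}\le \normsq{\bs x^k-\bs x^*}-2\gamma_k\langle\FF(\bs x^k),\bs x^k-\bs x^*\rangle-2\gamma_k\langle\epsilon^k,\bs x^k-\bs x^*\rangle+\gamma_k^2\normsq{\hat F(\bs x^k,\bs\xi^k)}.
\end{equation*}
Next I would take conditional expectation $\EEk{\cdot}$: the cross term with $\epsilon^k$ vanishes by unbiasedness, the term $\langle\FF(\bs x^k),\bs x^k-\bs x^*\rangle$ is nonnegative by restricted strict (hence monotone) monotonicity and the variational inequality characterizing $\bs x^*$, and the last term is controlled by restricted Lipschitz continuity (Assumption~\ref{ass_res_lip}) together with $\EEk{\normsq{\epsilon^k}}$, giving a bound of the form $\gamma_k^2(c_1\normsq{\bs x^k-\bs x^*}+c_2\EEk{\normsq{\epsilon^k}})$. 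Summability of $\gamma_k^2$ and of $\gamma_k^2\EEk{\normsq{\epsilon^k}}$ then lets me invoke Robbins--Siegmund to conclude that $\normsq{\bs x^k-\bs x^*}$ converges a.s.\ and $\sum_k\gamma_k\langle\FF(\bs x^k),\bs x^k-\bs x^*\rangle<\infty$ a.s.

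From there the argument is exactly the one used for Theorem~\ref{theo_SA_sne_strict}: boundedness of $(\bs x^k)$ (indeed $\bs\Omega$ is compact by Standing Assumption~\ref{ass_G}) yields a convergent subsequence $\bs x^{k_j}\to\bar{\bs x}$; since $\sum_k\gamma_k=\infty$ while $\sum_k\gamma_k\langle\FF(\bs x^k),\bs x^k-\bs x^*\rangle<\infty$, along a subsequence $\langle\FF(\bs x^{k_j}),\bs x^{k_j}-\bs x^*\rangle\to 0$; strict monotonicity at $\bs x^*$ then forces $\bar{\bs x}=\bs x^*$, i.e.\ the limit point is itself a solution; and finally, because $\normsq{\bs x^k-\bar{\bs x}}$ converges a.s.\ (apply Robbins--Siegmund with $\bs x^*=\bar{\bs x}$) and a subsequence tends to $0$, the whole sequence converges a.s.\ to $\bar{\bs x}$, a v-SNE. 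The main obstacle, or rather the only thing requiring care, is verifying that the third part of Assumption~\ref{ass_step} is consistent with the $F_\textup{VR}$ error and that no part of the strict-monotonicity proof secretly used the specific structure of $F_\textup{SA}$ (it does not: it only used unbiasedness, restricted Lipschitz continuity, and restricted strict monotonicity), so the corollary follows with essentially no new work beyond this bookkeeping.
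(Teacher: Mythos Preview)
Your proposal is correct and matches the paper's treatment: the corollary is stated without a separate proof because the argument of Theorem~\ref{theo_SA_sne_strict} applies verbatim once one notes that $F_\textup{VR}$ is still an unbiased estimator and that Assumption~\ref{ass_step} (the vanishing step size and summable $\gamma_k^2\EEk{\normsq{\epsilon^k}}$) is at least as easy to satisfy for the averaged estimator. Your bookkeeping is exactly what is needed, and nothing more.
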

\begin{remark}
Since we take a vanishing step size (Assumption \ref{ass_step}), in the case of Corollary \ref{cor_strict}, the batch size sequence should not be increasing, that is, we can take a constant number of realizations at every iteration.
\fineass\end{remark}
\begin{remark}
While for strong monotonicity both the approximation schemes can be used, to the best of our knowledge, convergence holds under cocoercivity only with variance reduction, and under strict monotonicity only with a finite number of samples. This is because the techniques used in \cite{franci2020fbtac} with cocoercivity cannot be applied with a vanishing step size while the hypothesis of Lemma \ref{lemma_RS}, used in \cite{koshal2010}, do not hold with a fixed step size sequence.
\fineass\end{remark}

\section{Numerical simulations}\label{sec_sim}
\subsection{Comparative example} 
We consider a two-player game with pseudogradient mapping $\FF(x)=\EE[F(x)]$, where 
$$F(x)=\left[\begin{matrix}
1 & \xi_1\\
\xi_2 & 1000
\end{matrix}\right]\left[\begin{array}{c}
x_1\\
x_2
\end{array}\right].$$
The random variables $\xi_1$ and $\xi_2$ are chosen to have mean $1$ and $1000$ respectively, hence, $\FF(x)$ is cocoercive. The decision variables are bounded, i.e., $-\theta\leq x_i\leq\theta$ for $i=1,2$ where $\theta=1000$.
We compare our SFB with the algorithms mentioned in Table \ref{table_algo}, whose iterations in compact form are reported next. 
The extragradient (SEG) \cite{kannan2019} involves two projection steps:
$$\begin{aligned} 
y^k &=\op{proj}_{\Omega}[x^k-\gamma_kF_\textup{SA}(x^{k},\xi^k)] \\ 
x^{k+1} &=\op{proj}_{\Omega}[x^k-\gamma_kF_\textup{SA}(y^k,\eta^k)].
\end{aligned}$$
The Tikhonov regularization (TIK) reads as
$$
x^{k+1}=\op{proj}_{\Omega}[x^{k}-\gamma_k(F_\textup{SA}(x^{k},\xi^k)+\epsilon^k x^{k})]
$$
where $\epsilon^k$ is the regularization parameter, chosen according to \cite[Lemma 4]{koshal2013}. The regularized smoothed stochastic approximation scheme (RSSA) has a similar iteration, but the pseudogradient depends on $z^k\in\RR^n$, a uniform random variable over the ball centered at the origin with radius $\delta^k\to 0$ \cite[Lemma 5]{yousefian2017}:
$$
x^{k+1}=\op{proj}_{\Omega}[x^k-\gamma_k(F_\textup{SA}(x^k+z^k, \xi^k)+\eta^k x^k)].
$$
Lastly, the stochastic projected reflected gradient (SPRG) \cite{cui2016} exploits the second-last iterate in the update:
$$
x^{k+1}=\op{proj}_{\Omega}[x_{k}-\gamma_{k} F_\textup{SA}(2 x_{k}-x_{k-1},\xi^k)].
$$
The step size is chosen according to $\gamma_k=(1000+k)^{-1}$.
We evaluate the performance by measuring the residual of the iterates, i.e., $\op{res}(x^k)=\|x^k-\op{prox}_g(x^k-F(x^k))\|$ which is zero if and only if $x^k$ is a solution. For the sake of a fair comparison, we run the algorithms 100 times and take the average performance. To smooth the oscillatory behavior caused by the random variable, we average each iteration over a time window of 50 iterations. As one can see form Figure \ref{fig_iter} and \ref{fig_sec}, the SFB shows faster convergence both in terms of number of iterations and computational cost.
Our numerical experience also suggests that sometimes the SPRG is faster with smaller feasible sets (i.e., smaller $\theta$).
\begin{figure}[t]
\begin{center}
\includegraphics[width=\columnwidth]{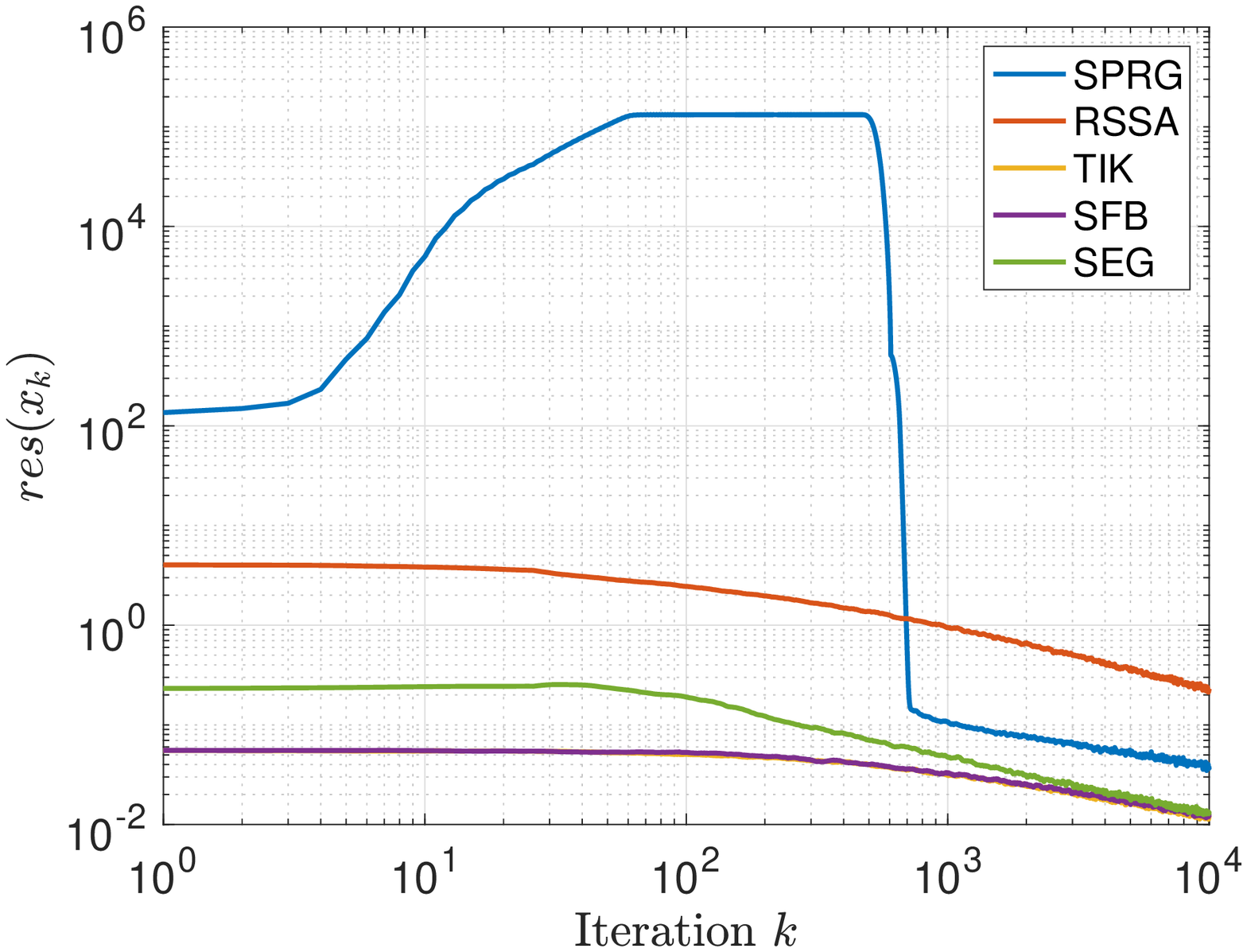}
\end{center}
\caption{Distance from the solution.}\label{fig_iter}
\end{figure}

\begin{figure}[t]
\begin{center}
\includegraphics[width=\columnwidth]{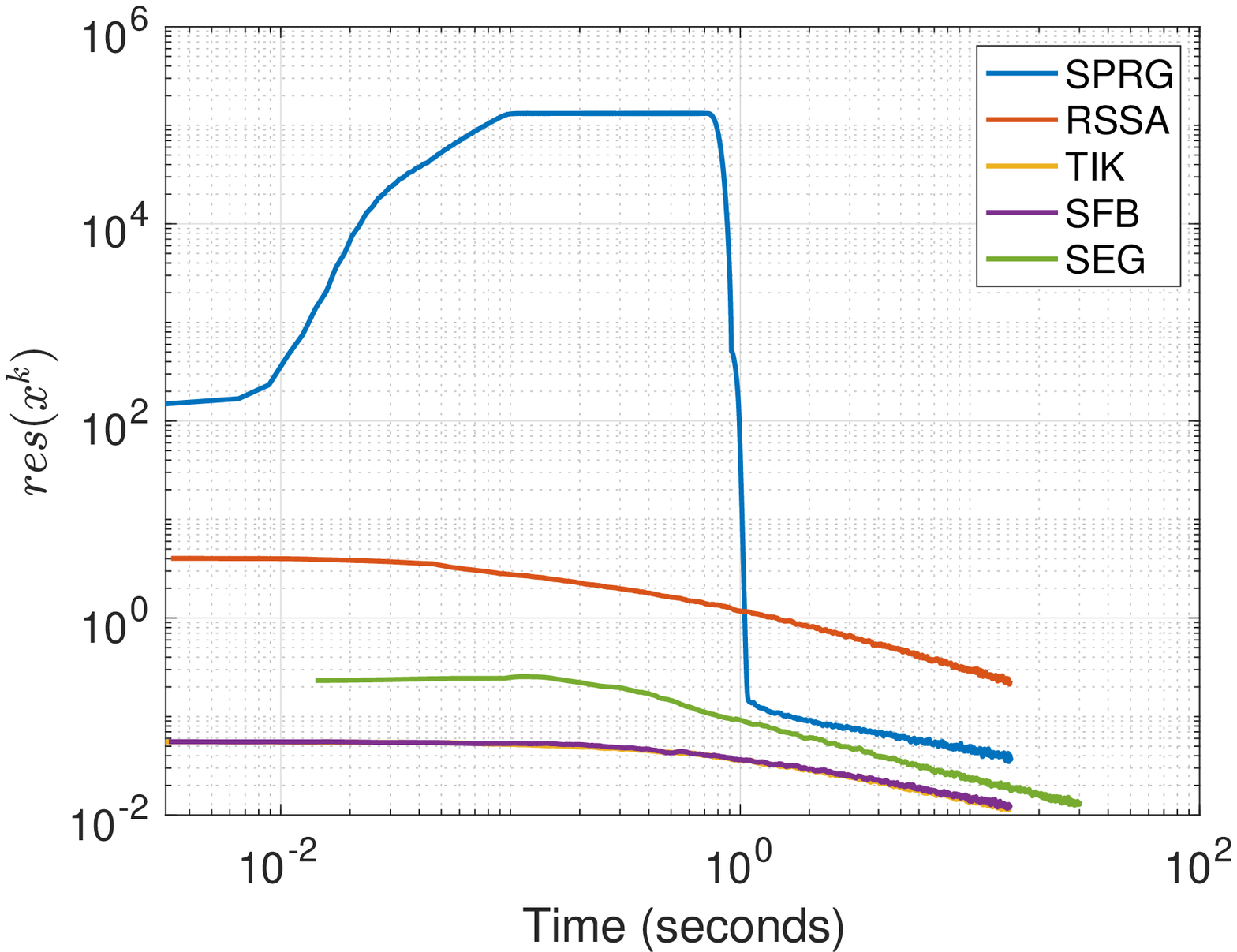}
\end{center}
\caption{Computational time.}\label{fig_sec}
\end{figure} 
\subsection{Influence of the variance}
Let us now suppose to have 3 players with pseudogradient mapping $\FF(x)=\EE[F(x)]$ where $F(x)=Fx$ and $F\in\RR^3$ is a randomly generated positive definite matrix. The uncertainty affects the elements on the antidiagonal which are iteratively drawn from a normal distribution with fixed mean, equal to the corresponding entry in $\FF(x)$ and different variances. The plot in Figure \ref{fig_10} shows, depending on the variance, the average number of iteration (over 10 simulations) at which the residual reach a precision of order $10^{-1}$ (blue), $10^{-2}$ (red) and $10^{-3}$ (green).

\begin{figure}[t]
\begin{center}
\includegraphics[width=\columnwidth]{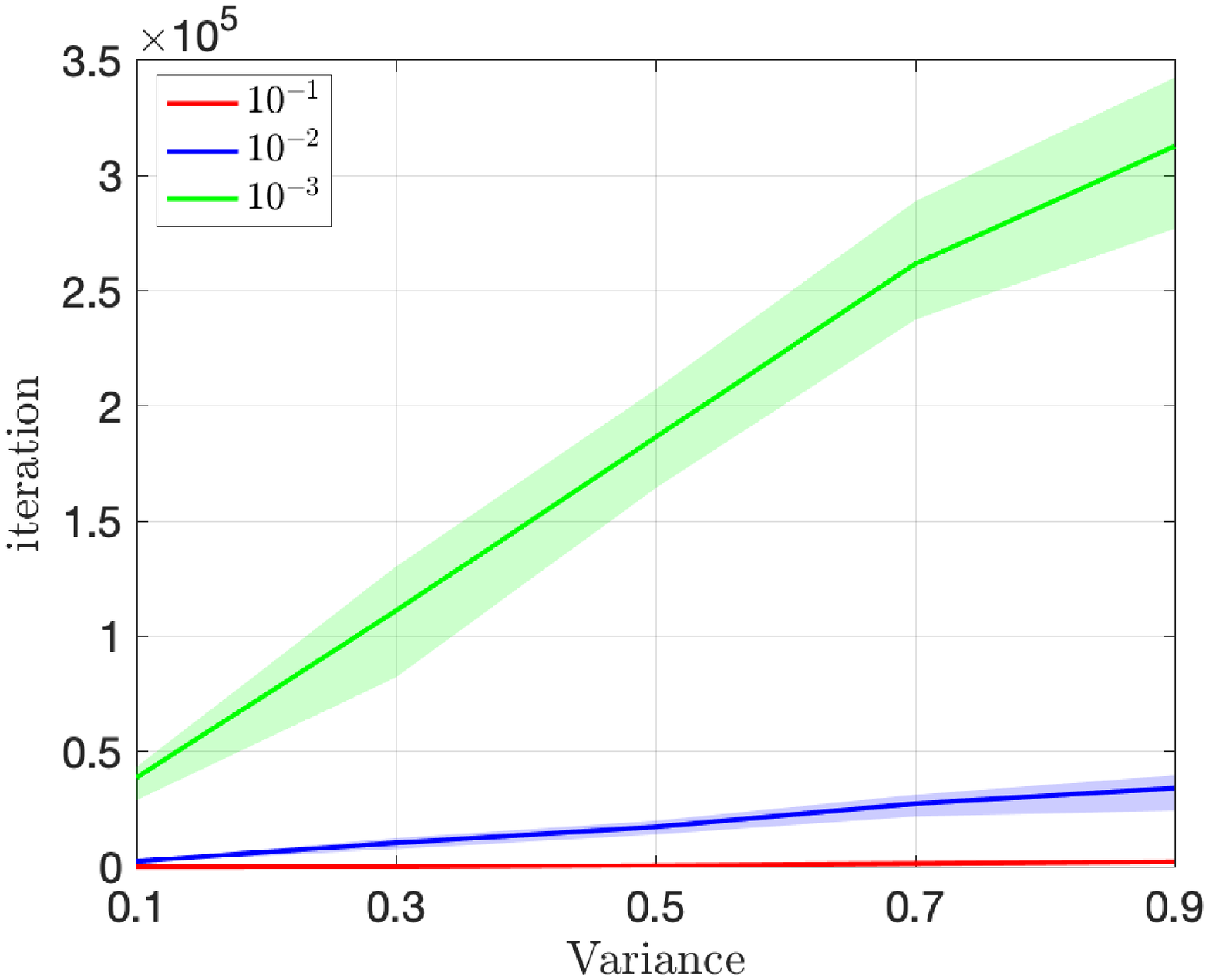}
\end{center}
\caption{Distance from the solution.}\label{fig_10}
\end{figure}
%

\section{Conclusion}\label{sec_conclu}

The stochastic forward-backward algorithm can be applied to stochastic Nash equilibrium problems with restricted monotonicity properties and nonsmooth cost functions. Specifically, convergence to a stochastic Nash equilibrium holds when the pseudogradient mapping is restricted strongly monotone or restricted strictly monotone with respect to the solution. To estimate the expected value mapping one can use both the stochastic approximation with only one sample or with variance reduction.

\appendices
\section{Auxiliary results}
Given the probability space $(\Xi, \mc F, \PP)$, let us recall some results on sequences of random variables.

First, let us define the filtration $\mc F=\{\mc F^k\}$, that is, a family of $\sigma$-algebras such that $\mathcal{F}_{0} = \sigma\left(X_{0}\right)$, for all $k \geq 1$, 
$\mathcal{F}^k = \sigma\left(X_{0}, \xi_{1}, \xi_{2}, \ldots, \xi^k\right)$
and $\mc F^k\subseteq\mc F_{k+1}$ for all $k\geq0$.

The Robbins-Siegmund Lemma is widely used in literature to prove a.s. convergence of sequences of random variables. 
\begin{lemma}[Robbins-Siegmund Lemma, \cite{RS1971}]\label{lemma_RS}
Let $\mc F=(\mc F^k)_{k\in\NN}$ be a filtration. Let $(\alpha_k)_{k\in\NN}$, $(\theta^k)_{k\in\NN}$, $(\eta^k)_{k\in\NN}$ and $(\chi^k)_{k\in\NN}$ be non negative sequences such that $\sum^k\eta^k<\infty$, $\sum^k\chi^k<\infty$ and let
$$\forall k\in\NN, \quad \EE[\alpha_{k+1}|\mc F^k]+\theta^k\leq (1+\chi^k)\alpha_k+\eta^k \quad a.s.$$
Then $\sum^k \theta^k<\infty$ and $(\alpha_k)_{k\in\NN}$ converges a.s. to a non negative random variable.
\end{lemma}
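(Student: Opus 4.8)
The plan is to derive both conclusions from the almost-sure convergence theorem for nonnegative supermartingales, after a rescaling that absorbs the multiplicative perturbation $1+\chi^k$. Throughout I assume, as the statement tacitly does, that the four sequences are $(\mc F^k)$-adapted and that each $\alpha_k$ is integrable (I come back to this at the end).

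First I would remove the factor $1+\chi^k$. Since $\sum_k\chi^k<\infty$, the partial products $a_k:=\prod_{j=0}^{k-1}(1+\chi^j)$ increase to a finite limit $a_\infty\in[1,\infty)$, so $1\le a_k\le a_\infty$ for all $k$ and $a_{k+1}=a_k(1+\chi^k)$ is $\mc F^k$-measurable. Dividing the hypothesis by $a_{k+1}$ and setting $\tilde\alpha_k:=\alpha_k/a_k$, $\tilde\theta^k:=\theta^k/a_{k+1}$, $\tilde\eta^k:=\eta^k/a_{k+1}$, one obtains
$$\EEk{\tilde\alpha_{k+1}}+\tilde\theta^k\le\tilde\alpha_k+\tilde\eta^k\quad\text{a.s.},$$
with $\tilde\theta^k,\tilde\eta^k\ge0$ and $\sum_k\tilde\eta^k\le\sum_k\eta^k<\infty$ (using $a_{k+1}\ge1$). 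Because $a_k$ stays in the fixed interval $[1,a_\infty]$, we have $\sum_k\theta^k<\infty$ iff $\sum_k\tilde\theta^k<\infty$, and $(\alpha_k)$ converges to a nonnegative limit iff $(\tilde\alpha_k)$ does. Hence it suffices to prove the lemma when $\chi^k\equiv0$, i.e.\ for $\EEk{\alpha_{k+1}}\le\alpha_k-\theta^k+\eta^k$.

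Next I would introduce $W_k:=\alpha_k+\sum_{j=0}^{k-1}\theta^j-\sum_{j=0}^{k-1}\eta^j$, whose two finite sums are $\mc F^k$-measurable. Taking $\EEk{\cdot}$ of $W_{k+1}$ and inserting the recursion gives $\EEk{W_{k+1}}\le W_k$, so $(W_k)$ is a supermartingale; moreover, since $\alpha_k\ge0$ and $\sum_{j<k}\theta^j\ge0$, it is bounded below by $-\eta_\infty$, where $\eta_\infty:=\sum_{j=0}^\infty\eta^j<\infty$. Thus $W_k+\eta_\infty$ is a nonnegative supermartingale, which by Doob's theorem converges a.s.\ to an a.s.-finite limit; so $W_k\to W_\infty$ a.s.\ with $W_\infty$ finite. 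Unpacking this: since $\sum_{j<k}\eta^j\uparrow\eta_\infty$, the sequence $V_k:=\alpha_k+\sum_{j=0}^{k-1}\theta^j=W_k+\sum_{j<k}\eta^j$ also converges a.s.\ to $W_\infty+\eta_\infty<\infty$; the partial sums $\big(\sum_{j=0}^{k-1}\theta^j\big)_k$ are nondecreasing and dominated by the convergent (hence bounded) sequence $(V_k)$, so $\sum_{j=0}^\infty\theta^j<\infty$ a.s.; and finally $\alpha_k=V_k-\sum_{j=0}^{k-1}\theta^j$ converges a.s.\ to $(W_\infty+\eta_\infty)-\sum_{j=0}^\infty\theta^j$, which is $\ge0$ because every $\alpha_k\ge0$. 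Undoing the rescaling of the first step completes the proof.

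I expect the only genuine subtlety to be the integrability/measurability bookkeeping the statement leaves implicit: the conditional expectations and the supermartingale argument require the $\alpha_k$ to be integrable and all four sequences adapted. If one does not wish to assume $\alpha_k\in L^1$ a priori (or if $\eta^k,\chi^k$ are themselves random with only $\sum_k\eta^k<\infty$, $\sum_k\chi^k<\infty$ a.s.), the standard remedy is a localization: stop at $\sigma_m:=\inf\{k:\sum_{j\le k}\eta^j>m\}$ (resp.\ $\tau_m:=\inf\{k:\alpha_k>m\}$), run the above for the stopped processes — which are honest nonnegative supermartingales once the additive constant is taken to be $m$ — and let $m\to\infty$, using $\sigma_m,\tau_m\to\infty$ a.s. Everything else, namely the telescoping behind $\EEk{W_{k+1}}\le W_k$ and the lower bound $W_k\ge-\eta_\infty$, is routine.
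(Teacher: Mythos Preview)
Your proof is correct and follows the classical route: rescale by $\prod_{j<k}(1+\chi^j)$ to reduce to $\chi^k\equiv0$, then build the supermartingale $W_k=\alpha_k+\sum_{j<k}\theta^j-\sum_{j<k}\eta^j$, bound it below by $-\sum_j\eta^j$, and invoke Doob's convergence theorem. However, there is nothing to compare against here: the paper does not prove Lemma~\ref{lemma_RS} at all --- it is quoted as an auxiliary result with a citation to the original Robbins--Siegmund paper \cite{RS1971} and used as a black box in Appendices~\ref{sec_proofs_SNE}--\ref{app_vr}. Your argument is essentially the standard one from that reference, so in effect you have supplied the proof the paper chose to omit.
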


\section{Proof of Theorem \ref{theo_SA_sne_strong}}\label{sec_proofs_SNE}
\begin{proof}
Similarly to \cite[Theorem 3.1]{jiang2008}, using the nonexpansiveness of the proximal operator \cite[Proposition 12.28]{bau2011}, and Assumptions \ref{ass_res_strong} and \ref{ass_res_lip}, we have that
\begin{equation}\label{eq_proof_strong}
\begin{aligned}
&\mathbb{E}[\|x^{k+1}-x^{*}\|^{2} \mid \mathcal{F}^k] \leq\|x^{k}-x^{*}\|^{2}+\gamma_k^{2} \ell^{2}\|x^{k}-x^{*}\|^{2} \\
&-2\gamma_k\mu\|x^{k}-x^{*}\|^{2}+\gamma_k^{2} \mathbb{E}[\|w^{k}\|^{2} \mid \mathcal{F}^k]
\end{aligned}
\end{equation}
Grouping some terms we obtain
$$
\begin{aligned}
&\mathbb{E}[\|x^{k+1}-x^{*}\|^{2} \mid \mathcal{F}^k] \leq(1+\gamma_k^{2} \ell^{2})\|x^{k}-x^{*}\|^{2} \\
&-2\gamma_k\mu\|x^{k}-x^{*}\|^{2}+\gamma_k^{2} \mathbb{E}[\|w^{k}\|^{2} \mid \mathcal{F}^k]
\end{aligned}
$$
and we can apply Lemma \ref{lemma_RS} to conclude that $x^k$ is bounded and that it has at least one cluster point. Moreover, $\|x^k-x^*\|\to0$ as $k\to\infty$, hence, $x^k$ converges a.s. to $x^*$.
\end{proof}

\section{Proof of Theorem \ref{theo_SA_sne_strict}}\label{app_strict}
\begin{proof}
Similarly to \cite[Proposition 1]{koshal2010}, using the nonexpansiveness of the proximal operator \cite[Proposition 12.28]{bau2011} we have that
$$
\begin{aligned}
&\mathbb{E}[\|x^{k+1}-x^{*}\|^{2} \mid \mathcal{F}^k] \leq(1+\gamma_k^{2} L^{2})\|x^{k}-x^{*}\|^{2} \\
&+\gamma_k^{2} \mathbb{E}[\|w^{k}\|^{2} \mid \mathcal{F}^k]-2 \gamma_k\langle x^{k}-x^{*},F(x^{k})-F(x^{*})\rangle
\end{aligned}
$$
Then, applying Lemma \ref{lemma_RS} we have that $x^k$ is bounded and that it has at least one cluster point $\bar x$. Moreover, we have that $\langle x^{k}-x^{*},F(x^{k})-F(x^{*})\rangle\to 0$ and that $\langle \bar x-x^{*},F(\bar x)-F(x^{*})\rangle=0$. Since Assumption \ref{ass_res_strict} holds, $\bar x=x^*$.
\end{proof}

\section{Proof of Theorem \ref{theo_strong_vr}}\label{app_vr}
\begin{proof}
Starting from \eqref{eq_proof_strong} and grouping, we have
$$
\begin{aligned}
&\mathbb{E}[\|x^{k+1}-x^{*}\|^{2} \mid \mathcal{F}^k] \leq\|x^{k}-x^{*}\|^{2} \\
&(\gamma_k^{2} \ell^{2}-2\gamma_k\mu)\|x^{k}-x^{*}\|^{2}+\gamma_k^{2} \mathbb{E}[\|w^{k}\|^{2} \mid \mathcal{F}^k]
\end{aligned}
$$
and we can apply Lemma \ref{lemma_RS} to conclude that $x^k$ is bounded and that it has at least one cluster point. Moreover, $\|x^k-x^*\|\to0$ as $k\to\infty$, hence, $x^k$ converges a.s. to $x^*$.
\end{proof}

\bibliographystyle{IEEEtran}
\bibliography{Biblio}

\end{document}